\documentclass[letterpaper,11pt]{amsart}

%subsection{Packages}

\usepackage[margin=1.2in]{geometry}
\usepackage{amsmath,amsthm,amssymb}
\usepackage{xspace,xcolor}
\usepackage[breaklinks,colorlinks,citecolor=teal,linkcolor=teal,urlcolor=teal,pagebackref,hyperindex]{hyperref}
\usepackage[alphabetic]{amsrefs}
\usepackage[all]{xy}
\usepackage{color}

\setlength{\parskip}{.05 in}

%subsection{Theorem environments}

\theoremstyle{plain}
\newtheorem{thm}{Theorem}[section]

\newtheorem{prop}[thm]{Proposition}
\newtheorem{cor}[thm]{Corollary}

\theoremstyle{definition}

\newtheorem{eg}[thm]{Example}

\newtheorem{conjecture}[thm]{Conjecture}

\theoremstyle{remark}
\newtheorem{rmk}[thm]{Remark}

%\numberwithin{equation}{section}

%subsection{Macros}

\def\Z{{\mathbf Z}}

\def\R{{\mathbf R}}
\def\C{{\mathbf C}}

\def\A{{\mathbf A}}

\def\cD{\mathcal{D}}

\def\cJ{\mathcal{J}}

\def\cO{\mathcal{O}}

\def\fra{\mathfrak{a}}

\def\.{\cdot}
\def\^{\widehat}

\def\({\left(}
\def\){\right)}

\renewcommand{\and}{ \ \ \text{ and } \ \ }

%add by Qianyu
\newcommand{\factor}[2]{\left. \raise 2pt\hbox{$#1$} \right/\hskip -2pt\raise -2pt\hbox{$#2$}}

\DeclareMathOperator{\lct} {lct}
\DeclareMathOperator{\fpt}{fpt}

\makeatletter
\@namedef{subjclassname@2020}{\textup{2020} Mathematics Subject Classification}
\makeatother

%Jakub: added a few commands and packages

%The comment box!

\usepackage{soul}

\begin{document}

\author[M.~Musta\c{t}\u{a}]{Mircea Musta\c{t}\u{a}}

\address{Department of Mathematics, University of Michigan, 530 Church Street, Ann Arbor, MI 48109, USA}

\email{mmustata@umich.edu}

\thanks{The author was partially supported by NSF grants DMS-2001132 and DMS-1952399.}

\subjclass[2020]{13A35, 14F18, 14F10}

\begin{abstract}
Given a smooth, irreducible complex algebraic variety $X$ and a nonzero regular function $f$ on $X$, we give an effective estimate for the difference between 
the jumping numbers of $f$ and the $F$-jumping numbers of a reduction $f_p$ of $f$ to characteristic $p\gg 0$, in terms of the roots of the
Bernstein-Sato polynomial $b_f$ of $f$. In particular, we get uniform estimates only depending on the dimension of $X$.
As an application, we show that if $b_f$ has no roots of the form $-\lct(f)-n$, with $n$ a positive 
integer, then the $F$-pure threshold of $f_p$ is equal to the log canonical threshold of $f$ for $p\gg 0$ with $(p-1)\lct(f)\in\Z$. 
\end{abstract}

\title[An estimate for $F$-jumping numbers]{An estimate for $F$-jumping numbers via the roots of the Bernstein-Sato polynomial}

\maketitle

\section{Introduction}

Let $X$ be a smooth, irreducible complex algebraic variety and let $f\in\cO_X(X)$ be a nonzero regular function defining the hypersurface $H\subset X$.
One associates to $f$ a sequence of coherent ideals $\cJ(f^{\lambda})\subseteq\cO_X$, the \emph{multiplier ideals} of $f$, depending on the parameter $\lambda\in\R_{\geq 0}$.
They can be defined either in terms of a log resolution of the pair $(X,H)$, or in terms of integrability conditions. These ideals give interesting invariants of the singularities of the hypersurface $H$ and they play an important role in vanishing theorems. They satisfy $\cJ(f^{\lambda})\subseteq \cJ(f^{\mu})$ if $\mu\leq\lambda$ and there is an
increasing sequence of positive rational numbers $(\lambda_m)_{m\geq 1}$, with $\lim_{m\to\infty}\lambda_m=\infty$, such that $\cJ(f^{\lambda})$ is constant for
$\lambda\in [\lambda_{m-1},\lambda_m)$ for all $m\geq 1$ (with the convention that $\lambda_0=0$). These are the \emph{jumping numbers} of $f$ and the smallest
jumping number $\lambda_1$ is the \emph{log canonical threshold} $\lct(f)$. 
For an introduction to multiplier ideals and jumping numbers, see \cite[Chapter~9]{Lazarsfeld}. 

It has long been understood that many classes and invariants of singularities that appear in birational geometry in characteristic 0 have analogues in positive characteristic,
defined via the Frobenius morphism. Suppose now that $Y$ is a regular scheme of characteristic $p>0$, which we assume to be $F$-finite (this means that the
Frobenius morphism $F\colon Y\to Y$ is a finite morphism). If $g\in\cO_Y(Y)$ is everywhere nonzero, then Hara and Yoshida \cite{HY} defined a sequence of ideals
$\tau(g^{\lambda})\subseteq \cO_Y$, the (generalized) \emph{test ideals} of $g$, depending on the parameter $\lambda\in\R_{\geq 0}$. These ideals turn out to satisfy many of the 
formal properties that multiplier ideals satisfy in characteristic $0$. In particular, $\tau(g^{\lambda})\subseteq \tau(g^{\mu})$ if $\mu\leq\lambda$ and it was shown in 
\cite{BMS2} that there is an
increasing sequence of positive rational numbers $(\alpha_m)_{m\geq 1}$, with $\lim_{m\to\infty}\alpha_m=\infty$, such that $\tau(g^{\lambda})$ is constant for
$\lambda\in [\alpha_{m-1},\alpha_m)$ for all $m\geq 1$ (with the convention that $\alpha_0=0$). These are the \emph{$F$-jumping numbers} of $g$ and the smallest such
invariant $\alpha_1$ is the \emph{$F$-pure threshold} $\fpt(g)$, introduced by Takagi and Watanabe in \cite{TW}. 

The most interesting results and open problems in this area are related to the comparison between multiplier ideals and test ideals via reduction mod $p$. Suppose that $X$
is a smooth, irreducible complex algebraic variety and $f\in\cO_X(X)$ is nonzero. After choosing a model $(X_A,f_A)$ of $(X,f)$ over a finitely generated $\Z$-subalgebra $A\subseteq\C$, 
for every closed point 
$t\in {\rm Spec}(A)$ we obtain a pair $(X_t,f_t)$ in positive characteristic (note that the field $k(t)$ is a finite field). As it is typical in this setting,
we always allow replacing $A$ by a localization $A_a$, in order to preserve certain properties from characteristic $0$ (for example, in this way 
we may assume that $X_A$ is smooth over ${\rm Spec}(A)$ and that $f_A$ defines a relative Cartier divisor over ${\rm Spec}(A)$).
The following is a key result from \cite{HY}:

\begin{thm}\label{thm0}
With the above notation, after possibly replacing $A$ by a localization $A_a$, the following hold:
\begin{enumerate}
\item[i)] For every closed point $t\in {\rm Spec}(A)$, we have
$\tau(f_t^{\lambda})\subseteq \cJ(f^{\lambda})_t$ for all $\lambda\in\R_{\geq 0}$.
\item[ii)] For every $\lambda\in\R_{\geq 0}$, if $t\in {\rm Spec}(A)$ is a closed point such that ${\rm char}\,k(t)\gg 0$, then
$\tau(f_t^{\lambda})=\cJ(f^{\lambda})_t$.
\end{enumerate}
\end{thm}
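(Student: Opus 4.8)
The plan is to realise both sides as pushforwards from a common log resolution and then to obtain the two assertions from the transformation rule for test ideals under proper birational morphisms: the inclusion is the soft half of that rule and holds in all characteristics, while the equality is its hard half and needs reduction-mod-$p$ vanishing and Frobenius-bijectivity inputs. First I would fix a log resolution $\pi\colon X'\to X$ of the pair $(X,H)$, write $\pi^{*}H=\sum_{i}a_{i}E_{i}$ with the $E_{i}$ the prime components of $(\pi^{-1}H)_{\red}\cup\Exc(\pi)$, and set $K_{X'/X}=\sum_{i}k_{i}E_{i}$, so that by \cite[Chapter~9]{Lazarsfeld} one has $\cJ(f^{\lambda})=\pi_{*}\cO_{X'}\big(\sum_{i}(k_{i}-\lfloor\lambda a_{i}\rfloor)E_{i}\big)$ for all $\lambda\geq 0$. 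I would then spread this out over a larger model $A$, so that $X_{A}$ and $X'_{A}$ are smooth over $\Spec(A)$, $\pi_{A}$ is proper and birational, $\sum_{i}E_{i,A}$ has relative simple normal crossings, and $f_{A}$ defines a relative Cartier divisor; then $\pi_{t}\colon X'_{t}\to X_{t}$ is a log resolution of $(X_{t},H_{t})$ for every closed point $t$. Since, up to Skoda's theorem (valid in every characteristic, and reducing everything to $\lambda\in[0,1)$), only finitely many of the line bundles $\cL_{A}:=\cO_{X'_{A}}(\sum_{i}(k_{i}-\lfloor\lambda a_{i}\rfloor)E_{i,A})$ arise, cohomology and base change lets me shrink $\Spec(A)$ once and for all so that the formation of $\pi_{A*}\cL_{A}$ commutes with base change to closed points; this identifies $\cJ(f^{\lambda})_{t}$ with $\pi_{t*}\cO_{X'_{t}}(K_{X'_{t}/X_{t}}-\lfloor\lambda\,\pi_{t}^{*}H_{t}\rfloor)$ for all $t$, and reduces both parts of the theorem to comparing this last ideal with $\tau(f_{t}^{\lambda})$.

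For part (i) I would use two ingredients. First, on the regular variety $X'_{t}$ the divisor $D_{t}:=\pi_{t}^{*}H_{t}$ has simple normal crossings support, and there the test ideal of a formal power is given by a combinatorial (Howald-type) formula valid in any characteristic, so $\tau(X'_{t},\lambda D_{t})=\cO_{X'_{t}}(-\lfloor\lambda D_{t}\rfloor)$ and the ideal above rewrites as $\pi_{t*}\big(\omega_{X'_{t}/X_{t}}\otimes\tau(X'_{t},\lambda D_{t})\big)$. Second, I would express $\tau(f_{t}^{\lambda})\cdot\omega_{X_{t}}$ as the eventually stable image of the $f_{t}$-twisted Cartier operators $F^{e}_{*}\big(f_{t}^{\lceil\lambda(p^{e}-1)\rceil}\omega_{X_{t}}\big)\to\omega_{X_{t}}$, and compare it with the analogous picture on $X'_{t}$ through the relative trace $\pi_{t*}\omega_{X'_{t}}\to\omega_{X_{t}}$, an isomorphism since $X_{t}$ is smooth: because the rounding $\lceil\lambda(p^{e}-1)\rceil a_{i}$ carried up from $X_{t}$ is coarser than the rounding $\lceil\lambda(p^{e}-1)a_{i}\rceil$ governing $\tau$ on $X'_{t}$, and because $\pi_{t*}$ is only left exact, one obtains the inclusion $\tau(f_{t}^{\lambda})\subseteq\pi_{t*}\big(\omega_{X'_{t}/X_{t}}\otimes\tau(X'_{t},\lambda D_{t})\big)=\cJ(f^{\lambda})_{t}$, which is (i).

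For part (ii) I would argue that this inclusion is an equality once, for the fixed $\lambda$, the higher direct images $R^{i}\pi_{t*}\cO_{X'_{t}}(K_{X'_{t}/X_{t}}-\lfloor\lambda D_{t}\rfloor)$ vanish for $i>0$ and the Frobenius acts bijectively on the relevant cohomology of $X'_{t}$, so that the stable image computing $\tau(f_{t}^{\lambda})$ is reached in a single step and is unaffected by the two slippages noted above. Both of these positive-characteristic statements hold for the reduction mod $p$ of the fixed characteristic-zero resolution provided $p\gg 0$: local vanishing descends by the Deligne--Illusie method (equivalently, by Hara's arguments), and the bijectivity of Frobenius is the positive-characteristic incarnation of the fact that $X$ has rational singularities, due to Hara and, independently, Mehta--Srinivas and Smith. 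Combined with the reduction to $\lambda\in[0,1)$ via Skoda, this gives $\tau(f_{t}^{\lambda})=\cJ(f^{\lambda})_{t}$ for $\operatorname{char}k(t)\gg 0$.

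The hard part is precisely this last step, and it is hard in a way the present paper is meant to quantify: the threshold $p_{0}(\lambda)$ beyond which the vanishing and Frobenius-bijectivity inputs hold cannot be taken uniform in $\lambda$ --- as $\lambda$ approaches a jumping number the divisor $\lfloor\lambda D_{t}\rfloor$ and the cohomology it twists both change, and indeed the $F$-pure threshold can be strictly less than the log canonical threshold for infinitely many primes. Making the dependence of $p_{0}(\lambda)$ on $\lambda$ effective, through the roots of $b_{f}$, is exactly the task taken up in what follows.
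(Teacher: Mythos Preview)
The paper does not give a proof of this statement at all: Theorem~\ref{thm0} is quoted verbatim as ``a key result from \cite{HY}'' and is used as a black box throughout (in particular as input to the proof of Theorem~\ref{thm2}). So there is no proof in the paper to compare your proposal against.

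That said, your outline is a fair summary of the strategy in \cite{HY}: spread out a fixed log resolution, identify $\cJ(f^{\lambda})_t$ with $\pi_{t*}\cO_{X'_t}(K_{X'_t/X_t}-\lfloor\lambda\,\pi_t^*H_t\rfloor)$ via cohomology and base change, use the combinatorial (Howald-type) computation of test ideals for simple normal crossings divisors on $X'_t$, and then get the inclusion in (i) from the transformation rule for test ideals under proper birational maps, and the equality in (ii) from the reduction-mod-$p$ vanishing/Frobenius-surjectivity arguments of Hara and Deligne--Illusie. One small caution: your use of Skoda to reduce to finitely many line bundles is fine for a principal ideal (since $\tau(f^{\lambda})=f\cdot\tau(f^{\lambda-1})$ and $\cJ(f^{\lambda})=f\cdot\cJ(f^{\lambda-1})$ for $\lambda>1$), but you should be explicit that what makes only finitely many sheaves appear on $[0,1]$ is the finiteness of jumping numbers there, not Skoda itself. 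Also, in (ii) the precise mechanism is not quite ``Frobenius acts bijectively'' but rather the surjectivity of the relevant trace/Cartier maps on the higher direct images, which is what Hara extracts from Deligne--Illusie; this is a matter of phrasing rather than a gap. Your closing paragraph correctly identifies that the non-uniformity of the bound in (ii) is exactly what the present paper addresses.
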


The assertion in ii) is the deepest one.
We note that the condition on ${\rm char}\,k(t)$ here can't be made independent of $\lambda$, in general. However, we have the following
open problem concerning the relation between multiplier ideals and test ideals:

\begin{conjecture}\label{conj1}
With the above notation, there is a dense subset $T$ of closed points in ${\rm Spec}(A)$ such that
$$\tau(f_t^{\lambda})=\cJ(f^{\lambda})_t\quad\text{for all}\quad t\in T,\lambda\in\R_{\geq 0}.$$
\end{conjecture}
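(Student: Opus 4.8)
The plan is to reduce the conjecture to finitely many values of the parameter, where Theorem~\ref{thm0}(ii) applies directly, and then to pin down the $F$-jumping numbers of the reductions $f_t$ by means of the roots of $b_f$. \emph{Step 1: reduction to finitely many jumping numbers.} By the Skoda theorem for principal ideals one has $\cJ(f^{\lambda+1})=f\cdot\cJ(f^{\lambda})$ for every $\lambda\geq 0$, and likewise $\tau(g^{\lambda+1})=g\cdot\tau(g^{\lambda})$ for test ideals; these identities are compatible with passage to the fibers $X_t$. Hence it suffices to produce a dense set $T$ of closed points with $\tau(f_t^{\lambda})=\cJ(f^{\lambda})_t$ for all $\lambda\in[0,1]$. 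On $[0,1]$ there are only finitely many jumping numbers $0<\lambda_1<\dots<\lambda_r\leq 1$ of $f$ (with $\lambda_1=\lct(f)$), so $\cJ(f^{\lambda})$ takes finitely many values there; applying Theorem~\ref{thm0}(ii) to the finite set $\{\lambda_1,\dots,\lambda_r\}$ together with a single localization $A_a$, we obtain $p_0$ such that $\tau(f_t^{\lambda_i})=\cJ(f^{\lambda_i})_t$ for all $i$ whenever ${\rm char}\,k(t)\geq p_0$. Since test ideals are non-increasing in $\lambda$, it follows that if moreover $f_t$ has \emph{no} $F$-jumping number strictly inside each interval $(\lambda_{i},\lambda_{i+1})$ (with the conventions $\lambda_0=0$, $\lambda_{r+1}=\infty$), then $\tau(f_t^{\lambda})=\tau(f_t^{\lambda_i})=\cJ(f^{\lambda_i})_t=\cJ(f^{\lambda})_t$ for every $\lambda\in[\lambda_i,\lambda_{i+1})\cap[0,1]$. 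Thus the problem reduces to ruling out, for $t$ in a dense set, such ``extra'' $F$-jumping numbers of $f_t$ in $[0,1]$.

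\emph{Step 2: locating the $F$-jumping numbers via $b_f$.} Here one would prove the effective estimate that is the subject of this paper: there is a quantity $\delta(b_f)$ depending only on the roots of $b_f$ (and, after a uniform bound on those roots, only on $\dim X$) such that for $p\gg 0$ every $F$-jumping number $\alpha$ of $f_t$ lying in $[0,1]$ satisfies $|\alpha-\lambda|\leq\delta(b_f)/(p-1)$ for some jumping number $\lambda$ of $f$. The expected mechanism is that a strict inclusion $\tau(f_t^{\alpha})\subsetneq\tau(f_t^{\alpha-\varepsilon})$ is, after reduction mod $p$, reflected in the $V$-filtration and the $b$-function of $f$, whose relevant exponents in this range are the roots of $b_f$; these roots control the jumping numbers of $f$, and the discrepancy caused by the characteristic-$p$ approximation is of size $O(1/p)$. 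One may also invoke the inclusion $\tau(f_t^{\lambda})\subseteq\cJ(f^{\lambda})_t$ of Theorem~\ref{thm0}(i) to keep $\alpha$ from overshooting the jumping number it is attached to, so that $\alpha$ lies in a small one-sided neighbourhood of it.

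\emph{Step 3: choosing the primes.} Write $\lambda_i=a_i/b_i$ in lowest terms, put $b=\lcm(b_1,\dots,b_r)$, and --- after the standard preliminary localization, which we may take so that $A$ is a domain of finite type and flat over $\Z[1/N]$ --- let $T$ be the set of closed points $t$ with $p:={\rm char}\,k(t)$ large and $p\equiv 1\pmod{b}$; by Dirichlet's theorem $T$ is dense in $\Spec(A)$. For $p\gg 0$ one checks, using the estimate of Step~2 together with the rationality and discreteness of $F$-jumping numbers, that the $F$-jumping numbers of $f_t$ lying in $[0,1]$ are rationals with denominator dividing $p-1$. Since $b_i\mid(p-1)$, each $\lambda_i$ is such a rational, and consecutive rationals with denominator dividing $p-1$ are $1/(p-1)$ apart; hence, provided $\delta(b_f)<1$, the estimate of Step~2 forces every $F$-jumping number of $f_t$ in $[0,1]$ to equal one of the $\lambda_i$. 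This gives the absence of extra $F$-jumping numbers required in Step~1, and therefore the conjecture.

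\emph{Main obstacle.} The decisive point --- and the reason the conjecture is still open --- is obtaining in Step~2 a constant $\delta(b_f)<1$, i.e.\ an effective bound, in terms only of $\dim X$ and the roots of $b_f$, on how far an $F$-jumping number of $f_t$ can stray from the jumping numbers of $f$, strong enough to beat the $1/(p-1)$ spacing exploited in Step~3. A root of $b_f$ of the form $-\lct(f)-m$ with $m\in\Z_{>0}$ can inflate this discrepancy to order $1/(p-1)$ or larger, so that the collapsing argument of Step~3 fails; this is exactly why only conditional statements are presently attainable, such as the equality $\fpt(f_t)=\lct(f)$ for $p\equiv 1\pmod{b_1}$ under the hypothesis that $b_f$ has no such root. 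The estimate established in this paper is precisely the quantitative input that makes Steps~2 and~3 go through whenever these obstructing roots of $b_f$ are absent.
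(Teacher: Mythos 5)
This is stated as a \emph{conjecture} in the paper, not a theorem: the paper explicitly says it is equivalent to the Weak Ordinarity conjecture (\cite{Mustata}, \cite{MS}, \cite{BST}), a deep open problem in arithmetic geometry. There is no proof of it in the paper, so your outline cannot be checked against one; what the paper actually proves is the much weaker conditional Corollary~\ref{cor_thm1}, which concerns only the $F$-pure threshold and requires the extra hypothesis that $b_f$ has no root of the form $-\lct(f)-n$ with $n\in\Z_{>0}$.

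You do flag Step~2 as the obstacle, which is right, but the gap is more fundamental than ``obtaining $\delta(b_f)<1$.'' First, the effective estimate actually available (Theorem~\ref{thm2} and Remark~\ref{uniform_bound}) gives a deviation of order $\dim(X)/p$, not $<1/p$, so the collapsing you want in Step~3 fails for essentially every $f$ of interest. Second, and independently, Step~3 asserts that for suitable $p$ the $F$-jumping numbers of $f_t$ in $[0,1]$ are rationals with denominator dividing $p-1$; this is not established in the paper and is not true in general. The paper's interval-exclusion fact (\ref{eq_fpt_interval}) only constrains the $F$-pure threshold, not all $F$-jumping numbers, and the proof of Corollary~\ref{cor_thm1} relies on the special position of the \emph{smallest} $F$-jumping number, which has no analogue for the higher ones. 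Finally, even in the base case the obstructing roots $-\lct(f)-n$ do occur (e.g.\ $f=x^2+y^3$ has $-\tfrac{7}{6}=-\lct(f)-\tfrac{1}{3}-\cdots$; more to the point, $f=x^5+y^4$ in Example~\ref{example_referee} has the root $-\tfrac{31}{20}$ and indeed $\fpt(f_p)<\lct(f)$ for $p\equiv 19\ (\mathrm{mod}\ 20)$), so the $F$-pure threshold genuinely strays from $\lct(f)$ for a positive density of primes, and no estimate ``purely in terms of $b_f$'' can force the equality $\fpt(f_t)=\lct(f)$ for a dense set of $t$ without the extra hypothesis of Corollary~\ref{cor_thm1}. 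The conjecture must therefore be treated as open; the paper's contribution is an effective one-sided bound, not a proof.
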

It is known that this conjecture is equivalent to a deep conjecture in arithmetic geometry, the Weak Ordinarity conjecture (see \cite{Mustata} and \cite{MS}, as well as
\cite{BST} for the case of a singular ambient variety). It is instructive to see what the above theorem and conjecture say about the relation between $\lct(f)$ and $\fpt(f_t)$. 
First, the assertion in Theorem~\ref{thm0}i) says that after possibly replacing $A$ by a localization $A_a$, we may assume that $\fpt(f_t)\leq \lct(f)$ for all closed points in $t\in {\rm Spec}(A)$.
On the other hand, the result in Theorem~\ref{thm0}ii) implies that
\begin{equation}\label{eq_limit_intro}
\lim_{{\rm char}\,k(t)\to\infty}\fpt(f_t)=\lct(f).
\end{equation}
Finally, Conjecture~\ref{conj1} predicts that there is a dense subset $T$ of closed points in ${\rm Spec}(A)$ such that
$\fpt(f_t)=\lct(f)$ for all $t\in T$. 

Our main goal in this note is to give an effective estimate of the limit in (\ref{eq_limit_intro}) and a generalization of this to arbitrary jumping numbers, in terms of the roots of the \emph{Bernstein-Sato polynomial}
$b_f(s)$ of $f$. Recall that $b_f(s)$ is the monic polynomial of smallest degree such that
\begin{equation}\label{eq_b_f}
b_f(s)f^s\in\cD_X[s]\bullet f^{s+1},
\end{equation}
where $\cD_X$ is the sheaf of differential operators on $X$. Here $f^s$ has to be interpreted as a formal symbol on which differential operators
act in the expected way. The existence of a polynomial $b_f(s)$ as in (\ref{eq_b_f}) was proved by Bernstein in \cite{Bernstein} when $X={\mathbf A}^n$
and by Kashiwara \cite{Kashiwara} in general (in fact, for arbitrary holomorphic functions on complex manifolds). Furthermore, it was shown in \cite{Kashiwara}
that all roots of $b_f$ are negative rational numbers.

The following is our estimate for the $F$-pure thresholds of the reductions of $f$. 

\begin{thm}\label{thm1}
Let $X$ be a smooth complex algebraic variety and $f\in\cO_X(X)$ nonzero. If $(X_A, f_A)$ gives a model of $(X, f)$ over $A$, then after possibly replacing $A$ by a localization
$A_a$, for every closed point $t\in {\rm Spec}(A)$ with ${\rm char}\,k(t)=p_t$,
the following holds: if $i$ is a positive integer such that $(p_t-i)\cdot \lct(f)\in\Z$, then 
$$\fpt(f_t)>\lct(f)-\frac{m_i}{p_t},$$
where $m_i=\max\big\{\beta\mid b_f(-\beta)=0, \beta-(i\cdot\lct(f)+1-\lceil i\cdot\lct(f)\rceil)\in\Z_{\geq 0}\big\}$. 
\end{thm}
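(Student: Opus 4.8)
Set $c:=\lct(f)$, and assume throughout that $p_t$ is large (finitely many residue characteristics can be discarded by shrinking $A$). The plan is to turn the inequality into the statement that a test ideal is the unit ideal, reduce that to a non‑membership modulo a Frobenius power of a maximal ideal, and establish the non‑membership by reducing the Bernstein–Sato functional equation modulo $p_t$. First, since $\tau(f_t^{\lambda})=\cO_{X_t}$ exactly when $\lambda<\fpt(f_t)$, the bound $\fpt(f_t)>c-m_i/p_t$ is equivalent to $\tau(f_t^{\,c-m_i/p_t})=\cO_{X_t}$. Writing $c=a/b$ in lowest terms, the hypothesis $(p_t-i)c\in\Z$ is the congruence $p_t\equiv i\pmod b$, while the condition $\beta-(ic+1-\lceil ic\rceil)\in\Z_{\ge 0}$ in the definition of $m_i$ forces $m_i\equiv ic\pmod\Z$; combining these, $N:=p_tc-m_i$ is an integer, and it is positive once $p_t$ is large. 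As the denominator of $N/p_t$ is a power of the characteristic, the test ideal has its one‑step description $\tau(f_t^{\,N/p_t})=(f_t^{\,N})^{[1/p_t]}$, the smallest ideal $\frb$ with $f_t^{\,N}\in\frb^{[p_t]}$; this is $\cO_{X_t}$ precisely when $f_t^{\,N}\notin\frm_x^{[p_t]}$ for every closed point $x$ (the assertion being trivial unless $x\in H_t$). Thus one must prove $\nu_{f_t,x}(p_t)\ge N$ for all closed points $x\in H_t$, where $\nu_{f_t,x}(p_t)$ denotes the largest $r$ with $f_t^{\,r}\notin\frm_x^{[p_t]}$.

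For this I would argue by contradiction: assume $f_t^{\,N}\in\frm_x^{[p_t]}$, and use the functional equation $b_f(s)f^s=P(s)\bullet f^{s+1}$ over $\C$ together with its iterates $\prod_{j=0}^{k-1}b_f(s+j)\cdot f^s=Q_k(s)\bullet f^{s+k}$ with $Q_k(s)\in\cD_X[s]$. After shrinking $\Spec(A)$, for $p_t\gg0$ the operators $Q_k$ that are needed have order $<p_t$, so their reductions modulo $p_t$ lie in the subalgebra of $\cD_{X_t}$ generated by $\cO_{X_t}$ and the coordinate derivations; the crucial geometric point is that operators in this subalgebra map $\frm_x^{[p_t]}$ into itself. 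Specializing $s$ to a nonnegative integer $m$ gives an identity $\bigl(\prod_{m'=m}^{m+k-1}b_f(m')\bigr)\,f_t^{\,m}=Q_k(m)(f_t^{\,m+k})$ in $\cO_{X_t}$, which turns $f_t^{\,m+k}\in\frm_x^{[p_t]}$ into $f_t^{\,m}\in\frm_x^{[p_t]}$ whenever the scalar $\prod_{m'}b_f(m')$ is a unit modulo $p_t$. Descending in this way from $f_t^{\,N}\in\frm_x^{[p_t]}$, one wants to reach a power of $f_t$ so small that membership in $\frm_x^{[p_t]}\subseteq\frm_x^{p_t}$ becomes incompatible with the singularity of $H_t$ at $x$ (or with an inductive hypothesis on $\dim X$, after restricting to a general hyperplane through $x$).

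The role of $m_i$ is to guarantee that this descent is not obstructed. A value $b_f(m')$ fails to be a unit modulo $p_t$ only when $m'$ is $p_t$‑adically congruent to $-\beta$ for some root $-\beta$ of $b_f$. For roots with $\beta\equiv ic\pmod\Z$, one checks — using that $N\equiv-m_i$ modulo $p_t$ in $\Z_{(p_t)}$ and that $c\le1$ — that the corresponding obstructing exponent is $\ge N$, hence never blocks a descent begun at $N$; this uses precisely the maximality of $m_i$ among roots in the class $ic$, which makes $m_i+j$ a non‑root for every $j\ge1$. Moreover the arguments of $b_f$ that occur along the descent are themselves $p_t$‑adically congruent to $-N\equiv m_i$, i.e. to $-ic$ modulo an integer, and the hypothesis $(p_t-i)c\in\Z$ is exactly what aligns this with the class defining $m_i$, so that a single invariant — the largest root of $b_f$ congruent to $ic$ — controls the estimate.

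The main obstacle is the last step. One must simultaneously keep the orders of the iterated operators $Q_k$ below $p_t$ while descending far enough, control the denominators of the roots of $b_f$, and track the $p_t$‑adic valuations of the numbers $b_f(m')$; and the genuinely delicate issue is to rule out an obstruction coming from a root of $b_f$ in a congruence class \emph{other} than $ic$, which does not fall under the computation of the previous paragraph and which — if one insists on the estimate with the stated $m_i$ — presumably has to be absorbed by an induction on $\dim X$ or by a more refined argument in positive characteristic. The function of the hypothesis $(p_t-i)c\in\Z$ is to force all the relevant rounding (in $m_i$, in $N=p_tc-m_i$, and in the $p_t$‑adic valuations) into the single residue class $ic\pmod\Z$, which is what makes a clean answer possible at all; it is also natural to prove the analogous estimate for an arbitrary jumping number and to recover the present statement by taking $\lambda_1=\lct(f)$, since the inductive shape of the functional equation is insensitive to which jumping number one targets.
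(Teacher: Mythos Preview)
Your approach differs substantially from the paper's, and the gap you identify at the end is real and is not closed along the lines you sketch.

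The paper does not attempt a direct descent. Instead it invokes a prior estimate \cite[Theorem~B(i)]{MZ}, proved via log resolutions, which supplies a constant $C>0$ depending only on $(X,f)$ (not on $p_t$) such that, after shrinking $A$, for every closed $t$ there is an $F$-jumping number $\mu$ of $f_t$ with $\lambda-\tfrac{C}{p_t}\le\mu\le\lambda$ (here $\lambda=\lct(f)$, or more generally any jumping number, as you anticipate). Given this, the Bernstein--Sato relation is applied \emph{once}, not iterated: writing $\mu=c^J(f_t)$ for a suitable ideal $J$, so that $\nu^J_{f_t}(p_t)=\lceil\mu p_t\rceil-1$, the standard observation from \cite{MTW} gives
\[
b_f\big(\lceil\mu p_t\rceil-1\big)\equiv 0\pmod{p_t}.
\]
Set $k:=\lceil\lambda p_t\rceil-\lceil\mu p_t\rceil\in\{0,\dots,C\}$; using $(p_t-i)\lambda\in\Z$ to write $\lceil\lambda p_t\rceil=\lambda(p_t-i)+\lceil\lambda i\rceil$, the congruence becomes
\[
b_f\big(\lceil\lambda i\rceil-\lambda i-k-1\big)\equiv 0\pmod{p_t}.
\]
The decisive step is now purely combinatorial: since $k$ ranges over a \emph{finite} set independent of $t$, after discarding finitely many primes every value of $k$ that occurs at all occurs for infinitely many $p_t$, so the congruence forces $b_f\big(\lceil\lambda i\rceil-\lambda i-k-1\big)=0$ exactly. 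By definition of $m_i$ this gives $k+1+\lambda i-\lceil\lambda i\rceil\le m_i$, and then
\[
\mu>\frac{\lceil\mu p_t\rceil-1}{p_t}=\frac{\lambda(p_t-i)+\lceil\lambda i\rceil-k-1}{p_t}\ge \lambda-\frac{m_i}{p_t}.
\]

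The external \cite{MZ} bound is precisely what neutralizes the obstacle you flag: it collapses an a priori unbounded descent to a finite problem, so that roots of $b_f$ in congruence classes other than that of $i\lambda$ simply never enter. In your direct approach the operator-order worry is actually harmless (descend one step at a time using $P(m)$, whose fixed order is $<p_t$ for $p_t\gg 0$), but the obstruction from $\overline{b_f}(m')=0$ at intermediate $m'<N$ is genuine: roots $\beta$ with $\beta\not\equiv i\lambda\pmod{\Z}$ produce such $m'$ not controlled by $m_i$, and neither an induction on $\dim X$ (a hyperplane section changes both $\lct$ and $b_f$) nor the multiplicity bound $f_t^{m}\notin\frm_x^{p_t}$ for $m\cdot\mult_x(f)<p_t$ is strong enough to finish. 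Your reformulation via $\tau(f_t^{N/p_t})=(f_t^N)^{[1/p_t]}$ and the reduction to $\nu_{f_t,x}(p_t)\ge N$ are correct and illuminating, but the argument is incomplete without an input of the \cite{MZ} type.
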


Note that if we write $\lct(f)=\tfrac{a}{b}$, with $a$ and $b$ relatively prime, then it is enough to consider in the theorem only those $i$ with $1\leq i\leq b$ and relatively prime
to $b$. In particular, we get an explicit lower bound for $\fpt(f_t)$, in terms of the roots of $b_f$,  that works for all $t$. 
Using a lower bound for the roots of the Bernstein-Sato
polynomial from \cite{Saito_microlocal}, we will also see that we may replace $m_i$ in the theorem by $\dim(X)$ 
(see Remark~\ref{uniform_bound}).

In fact, we prove a more general statement (see Theorem~\ref{thm2}), in which $\lct(f)$ is replaced by any jumping number of $f$. The main ingredient in the proof
of Theorem~\ref{thm1}
is a result from \cite{MZ} which says that there is such a lower bound, but for which $m_i$ is only determined, in a complicated way, by a log resolution (this, in turn, relied on the techniques in \cite{HY} that give (\ref{eq_limit_intro})). The other ingredient is the elementary observation, which goes back to \cite{MTW}, that for every 
closed point $t\in {\rm Spec}(A)$, the integer 
$\lceil \fpt(f_t)p_t\rceil-1$ gives a root of $b_f$ mod $p_t$. 

We note that the assertion in Theorem~\ref{thm1} when $\lct(f)=1$ has been recently obtained by Dodd in \cite{Dodd}, using completely different methods. In fact, his result 
provided us with the motivation for revisiting this circle of ideas.

It is well-known that $-\lct(f)$ is a root of $b_f$ (in fact, this is the largest root of the Bernstein-Sato polynomial, see \cite[Section~10]{Kollar}). When $b_f$ has no roots
of the form $-\lct(f)-n$, with $n\in\Z_{>0}$, we obtain the following result in the direction of the conjectural existence of prime reductions with $F$-pure threshold equal to 
$\lct(f)$.

\begin{cor}\label{cor_thm1}
Let $X$ be a smooth complex algebraic variety and $f\in\cO_X(X)$ nonzero such that $b_f\big(-\lct(f)-n\big)\neq 0$ for every $n\in\Z_{>0}$. 
If $(X_A,f_A)$ gives a model of $(X,f)$, then after possibly replacing $A$ by a localization
$A_a$, for every closed point $t\in {\rm Spec}(A)$ with ${\rm char}\,k(t)=p_t$, we have
$\fpt(f_p)=\lct(f)$ as long as $(p_t-1)\cdot \lct(f)\in\Z$.
\end{cor}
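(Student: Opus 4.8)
The plan is to derive the Corollary formally from Theorem~\ref{thm1}, applied with $i=1$, together with the standard description of the $F$-pure threshold via the invariants $\nu_f(p^e)=\max\{r\mid f^r\notin\fm^{[p^e]}\}$. By Theorem~\ref{thm0}~i), after replacing $A$ by a localization we have $\fpt(f_t)\le\lct(f)$ for every closed point $t$, so the whole point is the reverse inequality $\fpt(f_t)\ge\lct(f)$ whenever $(p_t-1)\lct(f)\in\Z$ (we may assume the zero locus of $f$ is nonempty, the Corollary being vacuous otherwise).

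First I would apply Theorem~\ref{thm1} with $i=1$: its hypothesis $(p_t-1)\lct(f)\in\Z$ is exactly the hypothesis of the Corollary. Write $c=\lct(f)$; since $0<c\le1$ (the log canonical threshold of a nonzero regular function with nonempty zero locus), $\lceil c\rceil=1$, so $c+1-\lceil c\rceil=c$ and therefore
\[
m_1=\max\big\{\beta\mid b_f(-\beta)=0,\ \beta-c\in\Z_{\ge 0}\big\}.
\]
The value $\beta=c$ lies in this set because $-\lct(f)$ is a root of $b_f$ (in fact the largest one), while by hypothesis $b_f(-c-n)\ne0$ for all $n\in\Z_{>0}$, so no $\beta=c+n$ with $n\ge1$ does; hence $m_1=c$. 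Theorem~\ref{thm1} then gives
\[
\fpt(f_t)\;>\;c-\frac{c}{p_t}\;=\;\frac{(p_t-1)c}{p_t}\;=\;\frac{M}{p_t},\qquad M:=(p_t-1)c\in\Z_{>0}
\]
(and $M\le p_t-1$, as $c\le1$).

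Next, fix a closed point $x$ of $X_t$ lying on $\{f_t=0\}$ and set $\nu_e:=\max\{r\mid f_t^r\notin\fm_x^{[p_t^e]}\}$ in the regular $F$-finite local ring $\cO_{X_t,x}$. Since $\fpt_x(f_t)\ge\fpt(f_t)>M/p_t$, one has $\nu_1\ge M$, i.e.\ $f_t^{M}\notin\fm_x^{[p_t]}$: otherwise $f_t^{Mp_t^{e-1}}=(f_t^{M})^{p_t^{e-1}}\in(\fm_x^{[p_t]})^{[p_t^{e-1}]}=\fm_x^{[p_t^e]}$ for every $e\ge1$, so $\nu_e\le Mp_t^{e-1}-1$ and $\fpt_x(f_t)=\sup_e\nu_e/p_t^e\le M/p_t$, a contradiction. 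Finally, by the well-known bound $\fpt_x(f_t)\ge\nu_e/(p_t^e-1)$ (equivalently, $\fpt_x(f_t)=\sup_e\nu_e/(p_t^e-1)$; see \cite{MTW}, \cite{BMS2}), taking $e=1$ gives $\fpt_x(f_t)\ge M/(p_t-1)=c=\lct(f)$. Since this holds at every such $x$ and $\fpt(f_t)$ equals the minimum of the local $F$-pure thresholds $\fpt_x(f_t)$ (attained on $\{f_t=0\}$), we conclude $\fpt(f_t)\ge\lct(f)$, which together with Theorem~\ref{thm0}~i) yields $\fpt(f_t)=\lct(f)$.

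Once Theorem~\ref{thm1} is available the argument is essentially formal. The only non-elementary ingredient is the lower bound $\fpt_x(f_t)\ge\nu_e/(p_t^e-1)$, which rests on the relation $\nu_{e_1+e_2}\ge p_t^{e_2}\nu_{e_1}+\nu_{e_2}$ (so that the periodic base-$p_t$ digit pattern forced by $\nu_1\ge M$ is actually realized at all powers of $p_t$) and is standard in the theory of $F$-thresholds; the only genuine computation is the identification $m_1=\lct(f)$. Accordingly I do not anticipate any serious obstacle.
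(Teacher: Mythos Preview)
Your proof is correct and follows the same strategy as the paper's: compute $m_1=\lct(f)$ from the hypothesis on $b_f$, apply Theorem~\ref{thm1} with $i=1$ to get $\fpt(f_t)>\lct(f)\tfrac{p_t-1}{p_t}$, and then rule out the open interval $\big(\lct(f)\tfrac{p_t-1}{p_t},\,\lct(f)\big)$. The only cosmetic difference is in this last step: the paper invokes the forbidden-interval fact (\ref{eq_fpt_interval}) directly for the global $\fpt(f_t)$ (with $a=(p_t-1)\lct(f)$, $e=1$), whereas you localize at closed points and use the equivalent bound $\fpt_x(f_t)\ge \nu_1/(p_t-1)$; the detour through local invariants is unnecessary but harmless.
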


\medskip

\noindent {\bf Acknowledgment}. I am grateful to Christopher Dodd for sharing with me a preliminary version of \cite{Dodd} and for several discussions
on this topic. I would also like to thank Shunsuke Takagi for some good questions on the first version of this note. Last but not least, I am indebted to the anonymous referee for several useful suggestions, especially for Example~\ref{example_referee}.

\section{A brief review of $F$-jumping numbers}

Let $Y$ be a regular $F$-finite scheme of characteristic $p>0$ and let $g\in\cO_Y(Y)$ be everywhere 
nonzero (in other words, $g$ is nonzero on every connected component of $Y$). 
Note that since $Y$ is regular, the Frobenius morphism $F\colon X\to X$ is also flat by a famous result of Kunz \cite{Kunz}. 
We do not recall the definition of the test ideals
$\tau(g^{\lambda})$ since we will not need it. We refer the reader to \cite{HY} for the original definition and to \cite{BMS1} for a simpler
one in our setting (when the ambient scheme is assumed to be regular and $F$-finite). 

We only review here the description of the $F$-jumping numbers of $g$ as $F$-thresholds, following \cite{MTW} and \cite{BMS1}. 
We note that if $Y=U_1\cup\ldots\cup U_N$ is an open cover, then the set of $F$-jumping numbers of $g$ is the union of the sets
of the jumping numbers of $g\vert_{U_i}$, for $1\leq i\leq N$. By taking a suitable affine cover of $Y$, we may thus reduce the study
of $F$-jumping numbers to the case of an affine scheme. 
In order to conform to the presentation in \cite{MTW} and \cite{BMS1}, we assume that $Y$ is affine and let $R=\cO_Y(Y)$.

Given a proper ideal $J\subseteq R$ such that $g\in {\rm rad}(J)$, for every $e\geq 1$ we put
$$\nu^J_g(p^e)=\max\{r\in\Z_{\geq 0}\mid g^r\not\in J^{[p^e]}\}.$$
Recall that $J^{[p^e]}=(h^{p^e}\mid h\in J)$. 
The flatness of the Frobenius morphism gives 
$$\big(J^{[p^{e+1}]} : g^{pr}\big)=\big(J^{[p^e]} : g^r\big)^{[p]},$$
hence $g^r\not\in J^{[p^e]}$ implies $g^{pr}\not\in J^{[p^{e+1}]}$.
We thus have 
$$\frac{\nu^J_g(p^e)}{p^e}\leq\frac{\nu^J_g(p^{e+1})}{p^{e+1}}\quad\text{for all}\quad e\geq 1$$
and the \emph{$F$-threshold of $g$ with respect to $J$} is
$$c^J(g):=\sup_{e\geq 1}\frac{\nu^J_g(p^e)}{p^e}=\lim_{e\to\infty}\frac{\nu^J_g(p^e)}{p^e}.$$

One can show that $c^J(g)<\infty$ and we have
\begin{equation}\label{eq1_F}
\frac{\nu^J_g(p^e)}{p^e}<c^J(g)\quad\text{for all}\quad e\geq 1
\end{equation}
(see \cite[Remark~1.2]{MTW} and \cite[Proposition~1.7(5)]{MTW}). 
What's special when working with a principal ideal such as $(g)$, is that if $g^{r+1}\in J^{[p^e]}$, then
$g^{(r+1)p}\in J^{[p^{e+1}]}$. We thus have
$$\frac{\nu^J_g(p^e)+1}{p^e}\geq \frac{\nu^J_g(p^{e+1})}{p^{e+1}}\quad\text{for all}\quad e\geq 1,$$
which immediately implies
\begin{equation}\label{eq2_F}
c^J(g)\leq\frac{\nu^J_g(p^e)+1}{p^e}\quad\text{for all}\quad e\geq 1.
\end{equation}
By combining the inequalities (\ref{eq1_F}) and (\ref{eq2_F}), we obtain
\begin{equation}\label{eq_nu}
\nu^J_g(p^e)+1=\lceil c^J(g)\cdot p^e\rceil\quad\text{for all}\quad e\geq 1
\end{equation}
(see \cite[Proposition~1.9]{MTW}). 

The $F$-thresholds are relevant for us since the set of $F$-jumping numbers of $g$ coincides with the set of all $F$-thresholds of $g$,
when the ideal $J$ varies (see \cite[Corollary~2.30]{BMS1}). We note that unlike in \emph{loc}. \emph{cit}. we do not consider $0$ as an $F$-jumping
number, which corresponds to the fact that we require $J$ to be a proper ideal of $\cO_X$. 

As we have mentioned in the Introduction, a basic result about the $F$-jumping numbers of $g$ is that they form a discrete set of rational numbers 
(see \cite[Theorem~3.1]{BMS1} for the case when $R$ is essentially of finite type over a field and
\cite[Theorem~1.1]{BMS2} for the general case). We note that the corresponding result about jumping numbers in characteristic 0 is an immediate consequence
of their description in terms of a log resolution, see \cite[Lemma~9.3.21]{Lazarsfeld}. 

As it is the case for jumping numbers in characteristic $0$, some $\lambda>1$ is an $F$-jumping number of $g$ if and only if $\lambda-1$ has the same property.
This follows, for example, from the fact that $\tau(g^{\alpha})=g\cdot\tau(g^{\alpha-1})$ for every $\alpha>1$ (see for example 
\cite[Proposition~2.25]{BMS1}).
A more interesting fact, that is peculiar to positive characteristic, is that if $\lambda$ is an $F$-jumping number of $g$, then so is $p\lambda$. This follows from the fact that
$p\cdot c^J(g)=c^{J^{[p]}}(g)$, see \cite[Proposition~3.4(1)]{BMS1}.
By combining these two facts one can show that the $F$-pure threshold $\fpt(g)$ can't lie in certain intervals: more precisely, 
for every $e\geq 1$ and every integer $a$, with $1\leq a\leq p^e-1$, we have
\begin{equation}\label{eq_fpt_interval}
\fpt(g)\not\in \left(\tfrac{a}{p^e},\tfrac{a}{p^e-1}\right).
\end{equation}
This is due to the fact that if $\fpt(g)\in \big(\tfrac{a}{p^e},\tfrac{a}{p^e-1}\big)$, then $p^e\cdot \fpt(g)-a$ is an $F$-jumping number of $g$ that is $<\fpt(g)$, a contradiction; 
see \cite[Proposition~4.3]{BMS2} for details.

We end this discussion of $F$-jumping numbers with the following result that we will need:

\begin{prop}\label{thm_ext_test}
Suppose that $Y={\rm Spec}(R)$ is a smooth affine scheme of finite type over the finite field $k$ and $k\subseteq k'$ is a finite field extension. If $g\in R$ is nowhere zero
and $g'=g\otimes 1\in R'=R\otimes_kk'$, then
$$\tau({g'}^{\lambda})=\tau(g^{\lambda})\cdot R'\quad\text{for all}\quad \lambda\in\R_{\geq 0}.$$
In particular, $g$ and $g'$ have the same $F$-jumping numbers.
\end{prop}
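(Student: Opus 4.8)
The plan is to deduce everything from the behavior of the operation $\mathfrak{a}\mapsto\mathfrak{a}^{[1/p^e]}$ (the smallest ideal $J$ with $\mathfrak{a}\subseteq J^{[p^e]}$) under the base change $R\hookrightarrow R'$, together with the description of test ideals in \cite{BMS1}. First I would record the elementary facts: since $k\subseteq k'$ is an extension of finite fields it is finite and separable, so $R\to R'=R\otimes_k k'$ is finite étale, and it is faithfully flat because $k'$ is a free $k$-module of positive finite rank. Consequently $R'$ is smooth of finite type over the perfect field $k'$, hence regular and $F$-finite (so $\tau((g')^\lambda)$ is defined, and $g'$ is again nowhere zero since the components of $Y'$ are integral and dominate those of $Y$); and for ideals of $R$ an inclusion holds if and only if it holds after applying $-\otimes_RR'$, in particular $\mathfrak{b}R'\cap R=\mathfrak{b}$ for every ideal $\mathfrak{b}\subseteq R$.

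The main step is the identification of Frobenius pushforwards: for every $e\ge1$ there is a canonical isomorphism $F^e_*R'\cong(F^e_*R)\otimes_RR'$ of $R'$-modules. One way to obtain it is that $R\to R'$ is étale and the relative Frobenius of an étale morphism is an isomorphism, so the formation of $F^e_*$ commutes with this base change; alternatively one argues directly, using that $k$ and $k'$ are perfect, so that $F^e$ restricts to automorphisms of $k$ and of $k'$ and $F^e_*R'=F^e_*(R\otimes_k k')\cong(F^e_*R)\otimes_{F^e_*k}(F^e_*k')\cong(F^e_*R)\otimes_k k'$, the twist on the $k'$-factor being undone by $F^{-e}$ on $k'$. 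Since $F^e_*R$ is a finitely presented $R$-module ($R$ is $F$-finite and noetherian) and $R\to R'$ is flat, this yields $\Hom_{R'}(F^e_*R',R')\cong\Hom_R(F^e_*R,R)\otimes_RR'$, and similarly $F^e_*(\mathfrak{a}R')\cong(F^e_*\mathfrak{a})\otimes_RR'$ as submodules of $F^e_*R'$, for every ideal $\mathfrak{a}\subseteq R$.

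Next I would invoke the description from \cite{BMS1} that $\mathfrak{a}^{[1/p^e]}=\sum_\phi\phi(F^e_*\mathfrak{a})$, the sum over all $\phi\in\Hom_R(F^e_*R,R)$. Combined with the previous paragraph this gives $(\mathfrak{a}R')^{[1/p^e]}=\mathfrak{a}^{[1/p^e]}R'$ for every ideal $\mathfrak{a}\subseteq R$. Taking $\mathfrak{a}=\big(g^{\lceil\lambda p^e\rceil}\big)$, so that $\mathfrak{a}R'=\big((g')^{\lceil\lambda p^e\rceil}\big)$, and using the formula $\tau(g^\lambda)=\bigcup_e\big(g^{\lceil\lambda p^e\rceil}\big)^{[1/p^e]}$ from \cite{BMS1} (the union being directed, and $-\otimes_RR'$ commuting with it), I obtain $\tau((g')^\lambda)=\tau(g^\lambda)R'$ for all $\lambda\in\R_{\ge0}$.

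For the last assertion, $\lambda>0$ is an $F$-jumping number of $g$ exactly when $\tau(g^\lambda)\ne\tau(g^{\lambda-\varepsilon})$ for $0<\varepsilon\ll1$; by faithful flatness this is equivalent to $\tau(g^\lambda)R'\ne\tau(g^{\lambda-\varepsilon})R'$, i.e. to $\tau((g')^\lambda)\ne\tau((g')^{\lambda-\varepsilon})$, i.e. to $\lambda$ being an $F$-jumping number of $g'$, so the two sets coincide. The hardest point is the Frobenius base-change isomorphism $F^e_*R'\cong(F^e_*R)\otimes_RR'$; once it is in hand the rest is formal. If one wanted only the statement about $F$-jumping numbers, one could instead work with the $F$-threshold description recalled above, but comparing $c^{J'}(g')$ for an ideal $J'\subseteq R'$ with $F$-thresholds over $R$ again relies on the identity $(J'\cap R)^{[p^e]}={J'}^{[p^e]}\cap R$, which is the same Frobenius base-change phenomenon.
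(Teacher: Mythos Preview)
Your proof is correct, but it takes a different route from the paper's. The paper simply cites \cite[Theorem~3.3]{HT}, which gives the behavior of test ideals under finite \'etale morphisms, noting that $R\to R'$ is finite \'etale since $k\subseteq k'$ is a separable extension; the second assertion is then immediate. You instead give a self-contained argument via the description of test ideals in \cite{BMS1}: the key point is the Frobenius base-change isomorphism $F^e_*R'\cong(F^e_*R)\otimes_RR'$ for \'etale $R\to R'$, from which the compatibility $(\mathfrak{a}R')^{[1/p^e]}=\mathfrak{a}^{[1/p^e]}R'$ and then $\tau((g')^\lambda)=\tau(g^\lambda)R'$ follow formally. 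Your approach has the advantage of relying only on \cite{BMS1}, which is already the paper's working framework for test ideals in the regular $F$-finite setting, and of making transparent exactly where the \'etale hypothesis enters; the paper's citation is of course shorter and defers to a result proved in greater generality (for not-necessarily-regular rings, where the \cite{BMS1} description is unavailable).
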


The first assertion is a very special case of \cite[Theorem~3.3]{HT}, which applies since the homomorphism $R\to R'$ is finite and \'{e}tale. The second assertion is an immediate consequence.

\section{The proof of the main result}

Our goal is to prove the following more general version of Theorem~\ref{thm1}.

\begin{thm}\label{thm2}
Let $X$ be a smooth, irreducible complex algebraic variety, $f\in\cO_X(X)$ nonzero, $\lambda>0$ a jumping number of $f$, and $\lambda'<\lambda$ such that
$\cJ(f^{\alpha})$ takes the same value for all $\alpha\in [\lambda',\lambda)$. If $(X_A,f_A)$ gives a model of $(X,f)$ over $A$,
then after possibly replacing $A$ by a localization
$A_a$, for every closed point $t\in {\rm Spec}(A)$ with ${\rm char}\,k(t)=p_t$, there is an $F$-jumping number $\mu$ for $f_t$ in the interval $(\lambda',\lambda]$,
and for every such $\mu$, if $(p_t-i)\lambda\in\Z$, then 
$$\mu>\lambda-\frac{m_i}{p_t},$$
where $m_i=\max\big\{\beta\mid b_f(-\beta)=0, \beta-(\lambda i+1-\lceil \lambda i\rceil)\in\Z_{\geq 0}\}$.
\end{thm}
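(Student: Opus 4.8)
We outline the approach.  The proof combines two inputs.  The first is the estimate of \cite{MZ}: after replacing $A$ by a localization, for every closed point $t$ there is an $F$-jumping number of $f_t$ in $(\lambda',\lambda]$, and every such $F$-jumping number $\mu$ satisfies $\mu>\lambda-\tfrac{C}{p_t}$ for a constant $C$ that is determined, in a complicated way, by a fixed log resolution of $(X,H)$.  The second is the elementary observation going back to \cite{MTW}: $\lceil\mu p_t\rceil-1$ reduces modulo $p_t$ to a root of $b_f$.  The conceptual point is that $C$ serves only to confine $\mu$ to an interval of length $O(1/p_t)$ around $\lambda$; once $\mu$ is that close to $\lambda$, the Bernstein--Sato congruence together with the arithmetic hypothesis $(p_t-i)\lambda\in\Z$ pins down which root of $b_f$ must occur, and this is exactly what upgrades $C$ to $m_i$.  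Accordingly, the plan is to first replace $A$ by a localization so that the conclusions of \cite{MZ} and of Theorem~\ref{thm0} hold, so that the functional equation (\ref{eq_b_f}) reduces modulo every closed point of $\Spec(A)$, and so that every residue characteristic exceeds the denominator of $\lambda$ and those of all roots of $b_f$; passing to an affine chart, it then suffices to prove, for a single closed point $t$ with $p:=p_t$ and every $F$-jumping number $\mu\in(\lambda',\lambda]$ with $(p-i)\lambda\in\Z$, that $\mu>\lambda-\tfrac{m_i}{p}$, using only that $\mu>\lambda-\tfrac{C}{p}$.

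Next I would set up the relevant integer.  By \cite[Corollary~2.30]{BMS1} we may write $\mu=c^J(f_t)$ for a proper ideal $J$ with $f_t\in{\rm rad}(J)$, so by (\ref{eq_nu}) (with $e=1$) the integer $n:=\nu^J_{f_t}(p)$ equals $\lceil\mu p\rceil-1$; in particular $\mu>\tfrac{n}{p}$.  Put $M:=(p-i)\lambda\in\Z$, so that $\lambda p=M+\lambda i$ and hence $\lceil\lambda p\rceil=M+\lceil\lambda i\rceil$.  Then $\mu\le\lambda$ gives $k:=M+\lceil\lambda i\rceil-1-n\ge 0$, while $\mu>\lambda-\tfrac{C}{p}$ forces $k$ to be bounded above by a constant independent of $p$.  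For the Bernstein--Sato input I would reduce a functional equation $b_f(s)f^s=P(s)\bullet f^{s+1}$, with $P(s)\in\cD_X[s]$, modulo $t$ and specialize to $s=n$, obtaining $\overline{b_f}(n)\,f_t^{n}=\overline{P}(n)\bullet f_t^{n+1}$ in $\cO_{X_t}$.  Since $f_t^{n+1}\in J^{[p]}$ and $\overline{P}(n)$ lies in the subalgebra of $\cD_{X_t}$ generated by $\cO_{X_t}$ and the derivations --- which preserves $J^{[p]}$, because every derivation annihilates $p$-th powers and one then invokes the Leibniz rule --- we get $\overline{b_f}(n)\,f_t^{n}\in J^{[p]}$; as $f_t^{n}\notin J^{[p]}$ by the choice of $n$, this forces $\overline{b_f}(n)=0$ in $k(t)$, i.e. $n\equiv-\beta\pmod p$ for some root $-\beta$ of $b_f$.

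Finally I would carry out the arithmetic.  Since $p$ does not divide the denominator of $\lambda$, we have $M\equiv-\lambda i\pmod p$, so substituting $n=M+\lceil\lambda i\rceil-1-k$ into $n\equiv-\beta\pmod p$ gives $\beta\equiv\big(\lambda i+1-\lceil\lambda i\rceil\big)+k\pmod p$.  Here $\beta$ ranges over the finite set of roots of $b_f$, the rational number $\lambda i+1-\lceil\lambda i\rceil$ is fixed, and $k$ is bounded independently of $p$; hence for $p$ large this congruence is an equality, so $\beta-\big(\lambda i+1-\lceil\lambda i\rceil\big)=k\in\Z_{\ge 0}$.  By the definition of $m_i$ this yields $\beta\le m_i$, whence $k\le m_i-\big(\lambda i+1-\lceil\lambda i\rceil\big)$ and therefore
$$n=M+\lceil\lambda i\rceil-1-k\geq M+\lambda i-m_i=\lambda p-m_i,$$
which gives $\mu>\tfrac{n}{p}\ge\lambda-\tfrac{m_i}{p}$, as required.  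I expect the real difficulty to be organizational rather than conceptual: arranging the various ``$p\gg 0$'' requirements (those of \cite{MZ}, of Theorem~\ref{thm0}, of the reduction of (\ref{eq_b_f}), and the one that turns the last congruence into an equality) so that a single localization of $A$ suffices; checking the \cite{MTW}-type observation for an arbitrary $F$-jumping number and an arbitrary ideal $J$, not only for $\fpt$; and treating uniformly the borderline case $\lambda i\in\Z$, in which $\lambda i+1-\lceil\lambda i\rceil=1$ and $\lambda p\in\Z$.
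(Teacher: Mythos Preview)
Your proof is correct and follows essentially the same route as the paper's: the same reduction to the affine case, the same use of \cite{MZ} to bound $k:=\lceil\lambda p_t\rceil-\lceil\mu p_t\rceil$, the same \cite{MTW}-type congruence $\overline{b_f}(\nu^J_{f_t}(p_t))=0$, and the same arithmetic manipulation via $(p_t-i)\lambda\in\Z$ to identify which root of $b_f$ is hit. The only cosmetic difference is in the final step, where the paper argues ``after inverting finitely many primes, every $k$ that occurs does so for infinitely many $p_t$, hence the congruence (\ref{eq_cong3}) is an equality,'' while you argue directly that both sides of $\beta\equiv(\lambda i+1-\lceil\lambda i\rceil)+k\pmod{p}$ are bounded rationals with bounded denominator, so a single further localization makes all residue characteristics large enough to force equality; these are equivalent.
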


\begin{rmk}\label{uniform_bound}
In the setting of Theorem~\ref{thm2}, if $\lambda=\tfrac{a}{b}$, with $a$, $b$ relatively prime, and 
$$I=\{i\mid 1\leq i\leq b,\, {\rm gcd}(i,b)=1\},$$
then for every $t$ there is $i\in I$ 
such that $(p_t-i)\lambda\in\Z$. The assertion in the theorem thus implies that if $m=\max_{i\in I}m_i$, then $\mu>\lambda-\frac{m}{p_t}$.

In fact, we can get a uniform bound just in terms of $n=\dim(X)$. More precisely, it follows from 
\cite{Saito_microlocal} that all roots of $b_f(s)$ are $>-n$; therefore, in the situation in the theorem, we have
$\mu>\lambda-\tfrac{n}{p_t}$ for all $t$. 
\end{rmk}

Note that by taking $\lambda=\lct(f)$ in Theorem~\ref{thm2}, we deduce the assertion in Theorem~\ref{thm1}. Hence from now on we focus on Theorem~\ref{thm2}.

\begin{rmk}
If $X$, $f$, and $A$ are as in the theorem, and $\lambda\in\R_{>0}$ is not a jumping number of $f$, then there is $\lambda'<\lambda$ such that 
$\cJ(f^{\lambda})=\cJ(f^{\lambda'})$. Applying Theorem~\ref{thm0} for both $\lambda$ and $\lambda'$, we see that if $t\in {\rm Spec}(A)$ is a closed point
with ${\rm char}\,k(t)\gg 0$, then there is no $F$-jumping number of $f_t$ in the interval $(\lambda',\lambda]$. 
\end{rmk}

Before giving the proof of Theorem~\ref{thm2}, we make a few preliminary remarks concerning models for $(X,f)$ over $A$. This is standard material, for more details we refer to
\cite[Section~2.2]{MS}. 

The assumption in Theorem~\ref{thm2} is that $A\subseteq\C$ is a finite type algebra over $\Z$ and $X_A$ is a scheme of finite type over $A$ 
and $f_A\in \cO_{X_A}(X_A)$ are such that we have an isomorphism $X_A\times_A\C\simeq X$ such that the pull-back of $f_A$ is mapped to $f$. By generic
smoothness, after possibly replacing $A$ by a localization $A_a$, we may and always assume that $X_A$ is smooth over ${\rm Spec}(A)$ and $f_A$ defines a relative Cartier divisor
in $X_A$ over ${\rm Spec}(A)$. 

If $A\subseteq A'\subseteq \C$, where $A'$ is another finite type algebra over $\Z$, then we have $X_{A'}=X\times_AA'$ and the image $f_{A'}$ of $f_A$ in 
$\cO_{X_{A'}}(X_{A'})$, which give a model for $(X,f)$ over $A'$.
By general properties of finite type morphisms, 
 there is a nonzero $a\in A$ such that ${\rm Spec}(A_a)$
is contained in the image of ${\rm Spec}(A')$. Moreover, if $t\in {\rm Spec}(A_a)$ is a closed point, then there is a closed point $t'\in {\rm Spec}(A')$
whose image is $t$. In particular, the field extension $k(t)\hookrightarrow k(t')$ is an extension of finite fields, hence we may
apply
Proposition~\ref{thm_ext_test} to the elements of an affine open cover of $X_t$ to conclude
that $f_t$ and $f_{t'}$ have the same $F$-jumping numbers. Therefore it is enough to prove the theorem for $X_{A'}$ and $f_{A'}$, hence we are free to replace $A$ by a larger
algebra with the same properties. 

We can now prove our main result.

\begin{proof}[Proof of Theorem~\ref{thm2}]
If $X=U_1\cup\ldots\cup U_N$ is an affine open cover, then $b_f={\rm lcm}(b_f\vert_{U_i})$. Since the set of jumping numbers of $f$ is the union of the sets of jumping
numbers of $f\vert_{U_i}$, for $1\leq i\leq N$, it is straightforward to see that it is enough to prove the assertion in the theorem for each $U_i$. Hence from now on
we may and will assume that $X={\rm Spec}(R)$ is affine. 

Suppose that $X_A={\rm Spec}(R_A)$. For every closed point  $t\in {\rm Spec}(A)$, let $R_t=R_A\otimes_Ak(t)$ be the $k(t)$-algebra corresponding to $X_t$.
By definition of the Bernstein-Sato polynomial, there is $P(s)\in D_R[s]$ such that
$$b_f(s)f^s=P(s)\bullet f^{s+1},$$
where $D_R$ is the ring of differential operators of $R$. 
After possibly replacing $A$ by a larger finitely generated $\Z$-subalgebra of $\C$, we may assume that 
the denominators of the roots of $b_f$ are invertible in $A$ and that we have $P_A(s)\in D^{(0)}_{R_A/A}[s]$
such that
$$b_f(s)f_A^s=P_A(s)\bullet f_A^{s+1},$$
where $D_{R_A/A}^{(0)}\subseteq {\rm End}_A(R_A)$ is the subring generated by $R_A$ and ${\rm Der}_A(R_A)$. 
In this case, for every closed point $t\in {\rm Spec}(A)$ with ${\rm char}\,k(t)=p_t$, we get a corresponding relation
$$\overline{b_f}(s)f_t^s=P_t(s)\bullet f_t^{s+1},$$
where $\overline{b_f}\in {\mathbf F}_{p_t}[s]$ is the image of $b_f$ and $P_t(s)$ has coefficients in the subring $D_{R_t/k(t)}^{(0)}\subseteq {\rm End}_{k(t)}(R_t)$
generated by $R_t$ and ${\rm Der}_{k(t)}(R_t)$. In particular, for every $m\in \Z_{\geq 0}$, we have
\begin{equation}\label{eq_b_funct}
\overline{b_f}(m)f_t^m\in D_{R_t/k(t)}^{(0)}\bullet f_t^{m+1}\subseteq R_t.
\end{equation}
A key observation, going back to \cite[Proposition~3.11]{MTW} is that for every proper ideal $J$ in $R_t$ and every $e\geq 1$, we have
\begin{equation}\label{eq_cong}
\overline{b_f}\big(\nu_{f_t}^J(p_t^e)\big)=0\,\,\text{in}\,\,{\mathbf F}_{p_t}.
\end{equation}
Indeed, if we take $m=\nu^J_{f_t}(p_t^e)$, then by definition we have $f^m\not\in J^{[p_t^e]}$, while $f^{m+1}\in J^{[p_t^e]}$. Since the ideal $J^{[p_t^e]}$ 
is preserved by the action of $D_{R_t/k(t)}^{(0)}$, the assertion in (\ref{eq_cong}) is a consequence of the formula in (\ref{eq_b_funct}). 

The other main ingredient in our proof is \cite[Theorem~B(i)]{MZ}, which says that there is $C>0$ such that after possibly replacing $A$ by some localization $A_a$,
we may assume that for every closed point $t\in {\rm Spec}(A)$ with ${\rm char}\,k(t)=p_t$, there is always an $F$-jumping number $\mu\in (\lambda',\lambda]$ for $f_t$
and every such $F$-jumping number satisfies 
\begin{equation}\label{eq_MZ}
\lambda-\tfrac{C}{p_t}\leq \mu\leq\lambda.
\end{equation}
Without any loss of generality, we may and will assume that $C\in\Z$, in which case it follows from (\ref{eq_MZ}) that 
\begin{equation}\label{eq_MZ2}
0\leq \lceil \lambda p_t\rceil-\lceil\mu p_t\rceil\leq C.
\end{equation}

Let us pick one such $t\in {\rm Spec}(A)$ and $\mu$ as above. 
Using the description of $F$-jumping numbers as $F$-thresholds discussed in the previous section, we see that there is
a proper ideal $J$ in $R_t$ such that $\mu=c^J(f_t)$.  In this case, it follows from (\ref{eq_nu}) that $\lceil \mu p_t\rceil =\nu^J_{f_t}(p_t)+1$.
Let us choose a positive integer $d$ such that the coefficients of $b_f$ lie in $\tfrac{1}{d}\Z$ and $d\lambda^i\in\Z$ for $i\leq {\rm deg}(b_f)$. 
We may and will assume that $p_t$ does not divide $d$. We deduce from 
(\ref{eq_cong}) that 
\begin{equation}\label{eq_cong2}
b_f\big(\lceil \mu p_t\rceil-1\big)\equiv 0\,\,({\rm mod}\,\,p_t)\,\,\text{in}\,\,\tfrac{1}{d}\Z.
\end{equation}

Let $k=\lceil \lambda p_t\rceil-\lceil\mu p_t\rceil$, so $0\leq k\leq C$ by (\ref{eq_MZ2}). 
Since we can write
$\lambda p_t=\lambda(p_t-i)+\lambda i$ and $\lambda(p_t-i)\in\Z$, it follows that
$$\lceil \lambda p_t\rceil =\lambda(p_t-i)+\lceil \lambda i\rceil.$$
The condition (\ref{eq_cong2}) thus implies
$$
b_f\big(\lambda (p_t-i)+\lceil \lambda i\rceil-k-1\big)\equiv 0\,\,({\rm mod}\,\,p_t)\,\,\text{in}\,\,\tfrac{1}{d}\Z,
$$
and thus
\begin{equation}\label{eq_cong3}
b_f\big(\lceil \lambda i\rceil-\lambda i-k-1)\equiv 0\,\,({\rm mod}\,\,p_t)\,\,\text{in}\,\,\tfrac{1}{d}\Z.
\end{equation}
Since $k$ is an integer bounded independently of $t$ and $\mu$, it can take only finitely many values. After possibly inverting finitely many prime integers,
we may assume that every $k$ that appears when we vary $t\in {\rm Spec}(A)$, appears for infinitely many primes $p_t$. In this case, condition
(\ref{eq_cong3}) implies that for every such $k$, we have
\begin{equation}\label{eq_cong4}
b_f\big(\lceil \lambda i\rceil-\lambda i-k-1)=0. 
\end{equation}
By definition, we thus have 
\begin{equation}\label{final_eq}
k+1+\lambda i-\lceil\lambda i\rceil\leq m_i.
\end{equation}
Note now that we have
$$\mu>\frac{\lceil\mu p_t\rceil-1}{p_t}=\frac{\lceil\lambda p_t\rceil-k-1}{p_t}=
\frac{\lambda(p_t-i)+\lceil\lambda i\rceil-k-1}{p_t}\geq \lambda-\frac{m_i}{p_t},$$
where the last inequality follows from (\ref{final_eq}).
This completes the proof of the theorem.
\end{proof}

We next deduce our result on the equality between $\lct(f)$ and $\fpt(f_t)$.

\begin{proof}[Proof of Corollary~\ref{cor_thm1}]
It follows from Theorem~\ref{thm1} that under our assumption, we may assume that for every closed point $t\in {\rm Spec}(A)$ such that $(p_t-1)\cdot\lct(f)\in\Z$, we have
$$\fpt(f_t)>\lct(f)-\frac{\lct(f)}{p_t},$$
where $p_t={\rm char}\,k(t)$. Moreover, we also have
$$\fpt(f_t)\leq\lct(f)$$
by Theorem~\ref{thm0}i). In order to prove that $\fpt(f_t)=\lct(f)$, it is enough to show that we can't have $\fpt(f_t)\in\left(\lct(f)-\tfrac{\lct(f)}{p_t},\lct(f)\right)$. 
This follows from (\ref{eq_fpt_interval}). Indeed, let us write $\lct(f)=\tfrac{a}{b}$, with ${\rm gcd}(a,b)=1$.
Since $(p_t-1)\cdot\lct(f)\in\Z$, we can write $p_t-1=bc$ for some positive integer $c$. We then have
$$\lct(f)=\frac{ac}{p_t-1}\quad\text{and}\quad \lct(f)-\frac{\lct(f)}{p_t}=\frac{ac}{p_t},$$
hence (\ref{eq_fpt_interval}) gives the assertion we need.
\end{proof}

\begin{eg}
Let $f=x^2+y^3\in\C[x,y]$. It is well-known that $\lct(f)=\tfrac{5}{6}$ 
(see \cite[Example~9.2.15]{Lazarsfeld}) 
and in fact
$$b_f(s)=\left(s+\tfrac{5}{6}\right)(s+1)\left(s+\tfrac{7}{6}\right)$$
(see for example \cite[Example~6.19]{Kashiwara2}). In this case we can take $A=\Z$ and the model given by $(\A_{\Z}^2, f)$, where we view $f$ as an element of 
$\Z[x,y]$.
If we denote by $f_p$ the image of $f$ in ${\mathbf F}_p[x,y]$, with $p\gg 0$, then it follows from Corollary~\ref{cor_thm1} that if $p\equiv 1$ (mod $6$),
then $\fpt(f_p)=\lct(f)=\tfrac{5}{6}$, and it follows from Theorem~\ref{thm1} that if $p\equiv 5$ (mod $6$), then
$$\fpt(f_p)\geq \tfrac{5}{6}-\tfrac{7}{6p}.$$
In fact, it is known that in this case $\fpt(f_p)=\tfrac{5}{6}-\tfrac{1}{6p}$, see
\cite[Example~4.3]{MTW}. 
\end{eg}

The following example was pointed out by the anonymous referee.

\begin{eg}\label{example_referee}
Consider the polynomials $f,g\in {\mathbf C}[x,y]$, where
$$f=x^5+y^4\quad\text{and}\quad g=x^5+x^3y^2+y^4.$$
The roots of $b_f(s)$ are the negatives of
$$\tfrac{9}{20}, \tfrac{13}{20}, \tfrac{7}{10}, \tfrac{17}{20}, \tfrac{9}{10},\tfrac{19}{20}, 1, \tfrac{21}{20},\tfrac{11}{10}, \tfrac{23}{20},\tfrac{13}{10},\tfrac{27}{20},
{\bf \tfrac{31}{20}}$$
and the roots of $b_g(s)$ are the negatives of 
$$\tfrac{9}{20}, {\bf \tfrac{11}{20}}, \tfrac{13}{20}, \tfrac{7}{10}, \tfrac{17}{20}, \tfrac{9}{10},\tfrac{19}{20}, 1, \tfrac{21}{20},\tfrac{11}{10}, \tfrac{23}{20},
\tfrac{13}{10},\tfrac{27}{20}$$
(see \cite[Sections~11 and 18]{Yano}).
This is a well-known example in which $f$ and $g$ are part of a family of isolated singularities, with constant Milnor number, but such that the
Bernstein-Sato polynomials are different.
In both cases we may take $A={\mathbf Z}$ and the models given by $({\mathbf A}^2_{\mathbf Z},f)$ and $({\mathbf A}^2_{\mathbf Z},g)$,
respectively, where we view $f$ and $g$ as elements of ${\mathbf Z}[x,y]$.
 Note that we have ${\rm lct}(f)=\tfrac{9}{20}={\rm lct}(g)$ and
 $$\fpt(f_p)=\tfrac{9}{20}\,\,\text{if}\,\,p\equiv 1\,(\text{mod}\,20)\quad\text{and}\quad \fpt(f_p)=\tfrac{9p-11}{20p}\,\,\text{if}\,\,p\equiv 19\,(\text{mod}\,20)$$
 (this follows, for example, from \cite[Theorem~3.1]{Hernandez}), while
 $$\fpt(g_p)=\tfrac{9}{20}\,\,\text{if}\,\,p\equiv 1\,(\text{mod}\,20)\quad\text{and}\quad \fpt(g_p)=\tfrac{9p-11}{20(p-1)}\,\,\text{if}\,\,p\equiv 19\,(\text{mod}\,20)$$
 (see \cite[Example~4.5]{MTW}). 
 Note that this matches the predictions provided by Corollary~\ref{cor_thm1}, namely that
 $$\fpt(f_p)=\lct(f)\quad\text{and}\quad \fpt(g_p)=\lct(g)\quad\text{if}\quad p\equiv 1\,(\text{mod}\,20)$$
 and those provided by Theorem~\ref{thm1}, namely that 
 $$\fpt(f_p)>\lct(f)-\tfrac{31}{20p}\quad\text{and}\quad \fpt(g_p)>\lct(g)-\tfrac{11}{20p}\quad\text{if}\quad p\equiv 19\,(\text{mod}\,20).$$
Note that in this case the bound satisfied by $g$ when $p\equiv 19$ (mod $20$) is \emph{not} satisfied by $f$ (in fact, for $f$ this becomes equality),
but this is allowed due to the presence of the root $-\tfrac{31}{20}$ of $b_f(s)$.
\end{eg}

\begin{rmk}
Suppose now that $X$ is a smooth, irreducible, $n$-dimensional complex algebraic variety and $\fra$ is an arbitrary nonzero coherent ideal sheaf on $X$. 
Recall that we can associate multiplier ideals and test ideals to non-principal ideal as well (see \cite[Chapter~9]{Lazarsfeld} for the case of multiplier ideals and 
\cite{BMS1} for the case of test ideals)
Let $\lambda>0$ be a jumping number of $\fra$, and let $\lambda'<\lambda$ be such that
$\cJ(\fra^{\alpha})$ takes the same value for all $\alpha\in [\lambda',\lambda)$. 
We fix an integer $d>\lambda$. 
We claim that 
if $(X_A,\fra_A)$ gives a model of $(X,\fra)$,
then after possibly replacing $A$ by a localization
$A_a$, for every closed point $t\in {\rm Spec}(A)$ with ${\rm char}\,k(t)=p_t$, there is an $F$-jumping number $\mu$ for $\fra_t$ in the interval $(\lambda',\lambda]$,
and for every such $\mu$, we have
\begin{equation}\label{last_eq}
\mu>\lambda-\frac{dn}{p_t}.
\end{equation}
Indeed, in order to see this, we may again assume that $X={\rm Spec}(R)$ is affine and let $f_1,\ldots,f_r\in R$ be generators of $\fra$. 
For $1\leq i\leq d$, let $h_i$ be a general linear combination of $f_1,\ldots,f_r$, with $\C$-coefficients, and let $h=\prod_{i=1}^dh_i$.
In this case it follows from \cite[Proposition~9.2.28]{Lazarsfeld} that 
\begin{equation}\label{eq_Laz}
\cJ(\fra^{\alpha})=\cJ(h^{\alpha/d})\quad\text{for all}\quad \alpha\leq \lambda.
\end{equation}
After possibly enlarging $A$, we may assume that we have a model $h_A\in R_A$ for $h$, such that $h_A\in\fra_A^d$.

We may assume the existence of an $F$-jumping number for $\fra_t$ in the interval $(\lambda',\lambda]$ 
by Theorem~\ref{thm0} (more precisely, by its version for arbitrary ideals), hence we only need to prove that 
\begin{equation}\label{last_eq2}
\tau(\fra_t^{\lambda'})=\tau(\fra_t^{\lambda-\tfrac{dn}{p_t}}).
\end{equation}
Note that we may assume that we have the equalities
\begin{equation}\label{eq100}
\tau(\fra_t^{\lambda'})=\cJ(\fra^{\lambda'})_t=\cJ(h^{\tfrac{\lambda'}{d}})_t=\tau(h_t^{\tfrac{\lambda'}{d}})=\tau(h_t^{\tfrac{\lambda}{d}-\tfrac{n}{p_t}}),
\end{equation}
where the first and third equalities follow from (\ref{thm0}), the second one follows from (\ref{eq_Laz}), and the fourth one follows from 
Remark~\ref{uniform_bound}. 
On the other hand, we have the inclusions
\begin{equation}\label{eq101}
\tau(h_t^{\tfrac{\lambda}{d}-\tfrac{n}{p_t}})\subseteq \tau(\fra_t^{\lambda-\tfrac{dn}{p_t}})\subseteq \tau(\fra_t^{\lambda'}),
\end{equation}
where the first inclusion follows from $h_t\in\fra_t^d$ and the second one follows from $\lambda'\leq\lambda-\tfrac{dn}{p_t}$.
By combining (\ref{eq100}) and (\ref{eq101}), we obtain, in particular, the equality (\ref{last_eq2}). This completes the proof of our assertion. 

We finally note that for an arbitrary ideal $\fra$ there is a notion of Bernstein-Sato polynomial, whose
roots are negative rational numbers, see \cite{BMS}. It is an interesting question whether 
the bound in (\ref{last_eq}) can be refined by taking into account these roots as in Theorem~\ref{thm2}. In fact, everything in the proof of the theorem
carries through in this setting, with the exception of one point: it is not clear that having $\lambda p_t-C\leq\mu p_t$ implies that $\lambda p_t-\nu^J_{\fra}(p_t)$
stays bounded.
\end{rmk}

\section*{References}
\begin{biblist}

\bib{Bernstein}{article}{
   author={Bern\v{s}te\u{\i}n, I. N.},
   title={Analytic continuation of generalized functions with respect to a
   parameter},
   journal={Funkcional. Anal. i Prilo\v{z}en.},
   volume={6},
   date={1972},
   number={4},
   pages={26--40},
}

\bib{BST}{article}{
   author={Bhatt, B.},
   author={Schwede, K.},
   author={Takagi, S.},
   title={The weak ordinarity conjecture and $F$-singularities},
   conference={
      title={Higher dimensional algebraic geometry---in honour of Professor
      Yujiro Kawamata's sixtieth birthday},
   },
   book={
      series={Adv. Stud. Pure Math.},
      volume={74},
      publisher={Math. Soc. Japan, Tokyo},
   },
   date={2017},
   pages={11--39},
}

\bib{BMS1}{article}{
   author={Blickle, M.},
   author={Musta\c{t}\v{a}, M.},
   author={Smith, K.~E.},
   title={Discreteness and rationality of $F$-thresholds},
   note={Special volume in honor of Melvin Hochster},
   journal={Michigan Math. J.},
   volume={57},
   date={2008},
   pages={43--61},
}

\bib{BMS2}{article}{
   author={Blickle, M.},
   author={Musta\c{t}\u{a}, M.},
   author={Smith, K.~E.},
   title={$F$-thresholds of hypersurfaces},
   journal={Trans. Amer. Math. Soc.},
   volume={361},
   date={2009},
   number={12},
   pages={6549--6565},
}

\bib{BMS}{article}{
   author={Budur, N.},
   author={Musta\c{t}\v{a}, M.},
   author={Saito, M.},
   title={Bernstein-Sato polynomials of arbitrary varieties},
   journal={Compos. Math.},
   volume={142},
   date={2006},
   number={3},
   pages={779--797},
}

\bib{Dodd}{article}{
author= {Dodd, C.},
title={Differential operators, gauges, and mixed Hodge modules}, 
journal={preprint arXiv:2210.12611},
date={2022},
}

\bib{HT}{article}{
   author={Hara, N.},
   author={Takagi, S.},
   title={On a generalization of test ideals},
   journal={Nagoya Math. J.},
   volume={175},
   date={2004},
   pages={59--74},
}

\bib{HY}{article}{
   author={Hara, N.},
   author={Yoshida, K.-I.},
   title={A generalization of tight closure and multiplier ideals},
   journal={Trans. Amer. Math. Soc.},
   volume={355},
   date={2003},
   number={8},
   pages={3143--3174},
}

\bib{Hernandez}{article}{
   author={Hern\'{a}ndez, D.~J.},
   title={$F$-invariants of diagonal hypersurfaces},
   journal={Proc. Amer. Math. Soc.},
   volume={143},
   date={2015},
   number={1},
   pages={87--104},
}

\bib{Kashiwara}{article}{
   author={Kashiwara, M.},
   title={$B$-functions and holonomic systems. Rationality of roots of
   $B$-functions},
   journal={Invent. Math.},
   volume={38},
   date={1976/77},
   number={1},
   pages={33--53},
}

\bib{Kashiwara2}{book}{
   author={Kashiwara, M.},
   title={$D$-modules and microlocal calculus},
   series={Translations of Mathematical Monographs},
   volume={217},
   note={Translated from the 2000 Japanese original by Mutsumi Saito;
   Iwanami Series in Modern Mathematics},
   publisher={American Mathematical Society, Providence, RI},
   date={2003},
   pages={xvi+254},
}

\bib{Kollar}{article}{
   author={Koll\'ar, J.},
   title={Singularities of pairs},
   conference={
      title={Algebraic geometry---Santa Cruz 1995},
   },
   book={
      series={Proc. Sympos. Pure Math.},
      volume={62},
      publisher={Amer. Math. Soc., Providence, RI},
   },
   date={1997},
   pages={221--287},
}

\bib{Kunz}{article}{
   author={Kunz, E.},
   title={Characterizations of regular local rings of characteristic $p$},
   journal={Amer. J. Math.},
   volume={91},
   date={1969},
   pages={772--784},
}

\bib{Lazarsfeld}{book}{
       author={Lazarsfeld, R.},
       title={Positivity in algebraic geometry II},  
       series={Ergebnisse der Mathematik und ihrer Grenzgebiete},  
       volume={49},
       publisher={Springer-Verlag, Berlin},
       date={2004},
}      

\bib{Mustata}{article}{
   author={Musta\c{t}\u{a}, M.},
   title={Ordinary varieties and the comparison between multiplier ideals
   and test ideals II},
   journal={Proc. Amer. Math. Soc.},
   volume={140},
   date={2012},
   number={3},
   pages={805--810},
}

\bib{MS}{article}{
   author={Musta\c{t}\u{a}, M.},
   author={Srinivas, V.},
   title={Ordinary varieties and the comparison between multiplier ideals
   and test ideals},
   journal={Nagoya Math. J.},
   volume={204},
   date={2011},
   pages={125--157},
}

\bib{MTW}{article}{
   author={Musta\c{t}\v{a}, M.},
   author={Takagi, S.},
   author={Watanabe, K.-i.},
   title={F-thresholds and Bernstein-Sato polynomials},
   conference={
      title={European Congress of Mathematics},
   },
   book={
      publisher={Eur. Math. Soc., Z\"{u}rich},
   },
   date={2005},
   pages={341--364},
}

\bib{MZ}{article}{
   author={Musta\c{t}\u{a}, M.},
   author={Zhang, W.},
   title={Estimates for $F$-jumping numbers and bounds for
   Hartshorne-Speiser-Lyubeznik numbers},
   journal={Nagoya Math. J.},
   volume={210},
   date={2013},
   pages={133--160},
}

\bib{Saito_microlocal}{article}{
   author={Saito, M.},
   title={On microlocal $b$-function},
   journal={Bull. Soc. Math. France},
   volume={122},
   date={1994},
   number={2},
   pages={163--184},
}

\bib{TW}{article}{
   author={Takagi, S.},
   author={Watanabe, K.-i.},
   title={On F-pure thresholds},
   journal={J. Algebra},
   volume={282},
   date={2004},
   number={1},
}

\bib{Yano}{article}{
   author={Yano, T.},
   title={On the theory of $b$-functions},
   journal={Publ. Res. Inst. Math. Sci.},
   volume={14},
   date={1978},
   number={1},
   pages={111--202},
}

\end{biblist}

\end{document}